 \newtheoremstyle{mytheorem}
 {3pt}
 {3pt}
 {\slshape}
 {}
 {\bfseries}
 {.}
 { }
 {}
\numberwithin{equation}{section}
\theoremstyle{theorem}
\newtheorem{theorem}{Theorem}[section]
\newtheorem{lemma}[theorem]{Lemma}
\theoremstyle{definition}
\newtheorem{example}{Example}[section]
\newtheorem{remark}{Remark}[section]
\newcommand{\Keywords}[1]{\ifthenelse{\isempty{#1}}{}{\smallskip \smallskip \noindent \textbf{Keywords}. #1}}
\newcommand{\MSC}[2][2010]{\ifthenelse{\isempty{#2}}{}{\smallskip \smallskip \noindent \textbf{#1MSC}. #2}}
\newcommand{\abstractnote}[1]{\ifthenelse{\isempty{#1}}{}{\smallskip \smallskip \noindent \textsuperscript{\dag}#1}}
\def\specialsection{\@startsection{section}{1}%
  \z@{\linespacing\@plus\linespacing}{.5\linespacing}%
  {\normalfont}}
\def\section{\@startsection{section}{1}%
  \z@{.7\linespacing\@plus\linespacing}{.5\linespacing}%
  {\normalfont\scshape}}
\patchcmd{\@settitle}{\uppercasenonmath\@title}{\Large\boldmath}{}{}
\patchcmd{\@settitle}{\begin{center}}{\begin{flushleft}}{}{}
\patchcmd{\@settitle}{\end{center}}{\end{flushleft}}{}{}
\patchcmd{\@setauthors}{\MakeUppercase}{\normalsize}{}{}
\patchcmd{\@setauthors}{\centering}{\raggedright}{}{}
\patchcmd{\section}{\scshape}{\large\bfseries\boldmath}{}{}
\patchcmd{\subsection}{\bfseries}{\bfseries\boldmath}{}{}
\renewcommand{\@secnumfont}{\bfseries}
\patchcmd{\@startsection}{\@afterindenttrue}{\@afterindentfalse}{}{}
\patchcmd{\abstract}{\leftmargin3pc}{\leftmargin1pc}{}{}
\def\maketitle{\par
  \@topnum\z@ 
  \@setcopyright
  \thispagestyle{empty}
  \ifx\@empty\shortauthors \let\shortauthors\shorttitle
  \else \andify\shortauthors
  \fi
  \@maketitle@hook
  \begingroup
  \@maketitle
  \toks@\@xp{\shortauthors}\@temptokena\@xp{\shorttitle}%
  \toks4{\def\\{ \ignorespaces}}
  \edef\@tempa{%
    \@nx\markboth{\the\toks4
      \@nx\MakeUppercase{\the\toks@}}{\the\@temptokena}}%
  \@tempa
  \endgroup
  \c@footnote\z@
  \@cleartopmattertags
}
\newcommand{\eoc}{\mathrm{eoc}}
\newcommand{\eo}{\mathrm{eo}}
\newcommand{\oeo}{\overline{\mathrm{eo}}}
\newcommand{\cp}{\mathcal{P}}
\newcommand{\cq}{\mathcal{Q}}
\newcommand{\co}{\mathcal{O}}
\newcommand{\ce}{\mathcal{E}}
\newcommand{\ceo}{\mathcal{EO}}
\title[On partitions with even parts below odd parts]{On partitions with even parts below odd parts}
\author[S. Chern]{Shane Chern}
\address{Department of Mathematics, The Pennsylvania State University, University Park, PA 16802, USA}
\email{shanechern@psu.edu; chenxiaohang92@gmail.com}
\date{}
\begin{document}

%

\maketitle

\begin{abstract}

Recently, Andrews gave a detailed study of partitions with even parts below odd parts in which only the largest even part appears an odd number of times. In this paper, we provide a combinatorial proof of the generating function identity of such partitions. We also have a further investigation on the largest even part. Finally, we give an interesting weighted overpartition generalization.

\Keywords{Partition, overpartition, generating function, combinatorial proof.}

\MSC{Primary 05A17; Secondary 11P83.}
\end{abstract}

\section{Introduction}\label{sect:intro}

Recently, Andrews \cite{And2017} gave a detailed study of partitions with even parts below odd parts in which only the largest even part appears an odd number of times (Here we allow partitions
with no even parts where we tacitly assume $0$ is the largest even and appears once). Let $\ceo^*$ be the set of such partitions. The most interesting property is the following generating function identity
\begin{equation}\label{eq:gf1}
\sum_{\pi \in \ceo^*} q^{|\pi|}=\frac{(q^4;q^4)_\infty}{(q^2;q^4)_\infty^2},
\end{equation}
where $|\pi|$ denotes the sum of all parts of $\pi$ and
\begin{align*}
(a;q)_n&:=\prod_{k=0}^{n-1} (1-a q^k),\\
(a;q)_\infty&:=\prod_{k=0}^{\infty} (1-a q^k).
\end{align*}

Andrews further defined the even-odd crank of $\pi \in \ceo^*$ by
$$\eoc(\pi):=\text{largest even part} - \sharp(\text{odd parts of $\pi$}).$$
He proved
\begin{equation}\label{eq:gf2}
\sum_{\pi \in \ceo^*} z^{\eoc(\pi)}q^{|\pi|}=\frac{(q^4;q^4)_\infty}{(z^2q^2;q^4)_\infty(z^{-2}q^2;q^4)_\infty},
\end{equation}
and obtained a mod $5$ congruence for this partition function.

At the end of his paper, Andrews asked for a combinatorial interpretation of \eqref{eq:gf2}. This stimulates the first topic of this paper. We will also have a further investigation on the largest even part. At last, we study an interesting weighted overpartition generalization of Andrews' partition function.

\section{A combinatorial proof of Andrews' generating function identity}\label{sect:gf}

Throughout this section, let $k$ and $r$ be nonnegative integers. We begin with some definitions.

Let $\cp$ be the set of ordinary partitions. Let $\cp_k$ be the set of partitions with largest part $=k$ and $\cp_{\le k}$ be the set of partitions with largest part $\le k$.

Let $\co_k^*$ be the set of odd partitions with exactly $2k$ parts where each different part appears an even number of times. Let $\ce^*$ be the set of even partitions where each different part appears an even number of times.

Let $\cq_{k,r}$ be the set of partitions with a $(k+r+1)\times k$ Durfee rectangle with the largest part below the Durfee rectangle $< k+r+1$. (Hence $\cq_{0,r}$ denotes the set of partitions with largest part $\le r$). Here the $(k+r+1)\times k$ Durfee rectangle of a partition is the largest rectangle in the Young diagram of the partition with the length of the rectangle being $r+1$ more than the width.

Let $\ceo_{k,r}^*$ be the set of partitions in $\ceo^*$ (defined in Sect.~\ref{sect:intro}) with $2k$ odd parts and largest even part $=2k+2r$ (and hence even-odd crank $=2r$).

Given any partition $\pi$, we write it in weakly decreasing order $(\pi_1,\pi_2,\ldots,\pi_\ell)$, where $\pi_1$ is the largest part and $\ell=\ell(\pi)$ counts the number of parts of $\pi$.

We first prove an interesting identity.

\begin{lemma}\label{le:1}
\begin{equation}
\sum_{k\ge 0} \frac{q^k}{(q;q)_k (q;q)_{k+r}} = \frac{1}{(q;q)_\infty} \sum_{k\ge 0} \frac{q^{k(k+r+1)}}{(q;q)_k (q;q)_{k+r}}.
\end{equation}
\end{lemma}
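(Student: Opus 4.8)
The plan is to treat the statement as an identity between two explicit $q$-series and to evaluate each side in closed form. Write $F_r$ for the left-hand side and set $G_r := \sum_{k\ge 0} q^{k(k+r+1)}/[(q;q)_k(q;q)_{k+r}]$ for the sum on the right, so that the assertion is exactly $F_r = G_r/(q;q)_\infty$. I would show that both sides are governed by the one-sided (``partial'') theta series
\[
\Theta_r := \sum_{j\ge 0} (-1)^j q^{\binom{j}{2}+(r+1)j},
\]
by proving $F_r = \Theta_r/(q;q)_\infty^2$ and $G_r = \Theta_r/(q;q)_\infty$; dividing one shape into the other then yields the lemma.

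First I would evaluate $F_r$ directly. Using $(q;q)_\infty = (q;q)_{k+r}(q^{k+r+1};q)_\infty$ to write $1/(q;q)_{k+r} = (q^{k+r+1};q)_\infty/(q;q)_\infty$, and then expanding $(q^{k+r+1};q)_\infty$ by Euler's identity $(z;q)_\infty = \sum_{j\ge0}(-1)^j q^{\binom{j}{2}} z^j/(q;q)_j$ with $z = q^{k+r+1}$, I can interchange the order of summation and isolate the inner sum over $k$. That inner sum is $\sum_{k\ge0} q^{k(j+1)}/(q;q)_k = 1/(q^{j+1};q)_\infty$ by the companion Euler identity $\sum_{k\ge0} z^k/(q;q)_k = 1/(z;q)_\infty$. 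Since $1/(q^{j+1};q)_\infty = (q;q)_j/(q;q)_\infty$, the factor $(q;q)_j$ cancels the $(q;q)_j$ produced by the first expansion, and everything collapses to $F_r = \Theta_r/(q;q)_\infty^2$.

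For $G_r$ the same collapse fails, because the exponent $q^{k(k+r+1)}$ is quadratic in $k$ and the inner sum is no longer an Euler series. Instead I would exploit that very quadratic exponent through the classical Durfee rectangle identity $1/(q;q)_\infty = \sum_{k\ge0} q^{k(k+r+1)}/[(q;q)_k(q;q)_{k+r+1}]$. Combining it with the elementary splitting $1/(q;q)_{k+r} - 1/(q;q)_{k+r+1} = -q^{k+r+1}/(q;q)_{k+r+1}$ and the reindexing $k(k+r+1)+(k+r+1) = k(k+r+2)+(r+1)$, a one-line computation gives the two-term recurrence $G_r + q^{r+1}G_{r+1} = 1/(q;q)_\infty$. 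The same recurrence, with the same right-hand side, is satisfied by $\Theta_r/(q;q)_\infty$, since $\Theta_r + q^{r+1}\Theta_{r+1} = 1$ telescopes. Viewed as $X_r = 1/(q;q)_\infty - q^{r+1}X_{r+1}$, each iteration pushes the undetermined term to strictly higher $q$-order, so the recurrence has a unique formal power series solution; hence $G_r = \Theta_r/(q;q)_\infty$. Combining the two evaluations gives $F_r = \Theta_r/(q;q)_\infty^2 = G_r/(q;q)_\infty$, as claimed.

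The main obstacle is precisely this asymmetry between the two sides: the linear exponent on the left lets Euler's identities do all the work, whereas the quadratic exponent on the right resists that collapse and must instead be fed into the Durfee rectangle identity to manufacture the recurrence. Two subsidiary points I would verify carefully are the telescoping $\Theta_r + q^{r+1}\Theta_{r+1}=1$ (a shift $j\mapsto j-1$ together with the sign change from $(-1)^{j-1}$) and the legitimacy of the double-series rearrangements, which is routine since every intermediate series converges in the formal power series topology. Should a fully combinatorial argument be preferred, the summand $q^{k(k+r+1)}/[(q;q)_k(q;q)_{k+r}]$ is the generating function for $\cq_{k,r}$, so the recurrence step can alternatively be read as a size-preserving correspondence between partitions organized by their Durfee rectangle.
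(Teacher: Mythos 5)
Your proof is correct, but it is a genuinely different argument from the one in the paper. The paper's proof of Lemma \ref{le:1} is a bijection: it reads the left side as the generating function for pairs in $\bigcup_{k}(\cp_k\times\cp_{\le k+r})$ and the right side as that for pairs in $\bigcup_k(\cp\times\cq_{k,r})$, and constructs an explicit weight-preserving map by cases on the Durfee rectangle of the second component (the paper also records, without details, a second analytic proof via a degenerate case of Heine's third transformation). Your route is analytic but of a different flavour from either: you evaluate both sides in closed form against the partial theta series $\Theta_r=\sum_{j\ge0}(-1)^jq^{\binom{j}{2}+(r+1)j}$, getting $F_r=\Theta_r/(q;q)_\infty^2$ by a clean double Euler expansion, and $G_r=\Theta_r/(q;q)_\infty$ from the recurrence $G_r+q^{r+1}G_{r+1}=1/(q;q)_\infty$ (all of your intermediate computations --- the telescoping $\Theta_r+q^{r+1}\Theta_{r+1}=1$, the reindexing $k(k+r+1)+(k+r+1)=k(k+r+2)+(r+1)$, the Durfee rectangle identity with offset $r+1$, and the uniqueness of the formal power series solution --- check out). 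What your approach buys is strictly more information: it identifies each side of the lemma individually as a partial theta quotient, which is a stronger statement than the lemma and sits close to the Kim paper \cite{Kim2010} that motivated the author's bijection. What it gives up is exactly what the paper needs the lemma for: Section \ref{sect:gf} is assembling a \emph{combinatorial} proof of \eqref{eq:gf2}, and substituting your argument for the paper's bijective one would reintroduce an analytic ingredient into that chain. As a standalone proof of the identity, however, yours is complete and arguably more self-contained than invoking Heine's transformation.
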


An analytic proof of this identity can be obtained by taking $c\to q^{r+1}$ and $z\to q$ and letting $a$ and $b$ tend to $0$ in Heine's third transformation (cf.~\cite[Eq.~(17.6.8)]{And2010})
$$\sum_{n\ge 0} \frac{(a;q)_n (b;q)_n z^n}{(q;q)_n (c;q)_n}=\frac{(abz/c;q)_\infty}{(z;q)_\infty}\sum_{n\ge 0} \frac{(c/a;q)_n (c/b;q)_n }{(q;q)_n (c;q)_n }\left(\frac{abz}{c}\right)^n.$$
We now give a combinatorial proof, which is motivated by \cite{Kim2010}.

\begin{proof}[Combinatorial proof of Lemma \ref{le:1}]
We first notice that
$$\sum_{k\ge 0} \frac{q^k}{(q;q)_k (q;q)_{k+r}}=\sum_{(\lambda,\pi)\in\cup_{k\ge 0}\left(\cp_k\times \cp_{\le k+r}\right)} q^{|\lambda|+|\pi|}$$
and
$$\frac{1}{(q;q)_\infty} \sum_{k\ge 0} \frac{q^{k(k+r+1)}}{(q;q)_k (q;q)_{k+r}} =\sum_{(\mu,\nu)\in\cup_{k\ge 0}\left(\cp \times \cq_{k,r}\right)} q^{|\mu|+|\nu|}.$$
Hence we only need to construct a bijection $\phi$ between $\cup_{k\ge 0}\left(\cp_k\times \cp_{\le k+r}\right)$ and $\cup_{k\ge 0}\left(\cp \times \cq_{k,r}\right)$.

Let $(\lambda,\pi)\in \cp_k\times \cp_{\le k+r}$ for some $k\ge 0$. We write $\phi((\lambda,\pi))=(\mu,\nu)$ for convenience. Now we seperate $\cp_k\times \cp_{\le k+r}$ into three disjoint cases.

\textit{Case 1}. If $\pi\in \cq_{s,r}$ for some $s\ge 0$, we put $(\mu,\nu)=(\lambda,\pi)$. Notice that $\nu_1\le \mu_1+r$.

\textit{Case 2}. If $\pi_1=r+1$, then $\pi$ does not belong to any $\cq_{s,r}$. Notice also that $k\ge 1$. We now delete the largest part of $\lambda$ (which is $\lambda_1=k$) to get $\mu$, and add $\lambda_1$ to $\pi_1$ to get $\nu$. Then $\nu\in \cq_{1,r}$ and $\nu_1=k+r+1> \mu_1+r$. (See Fig.~\ref{fig:1}\textrm{(a)}.)

\textit{Case 3}. If $\pi$ has an $(s+r+1)\times s$ Durfee rectangle for some $s\ge 1$ and the largest part below the Durfee rectangle is $s+r+1$, then $1\le s< k$. We first delete the largest part of $\lambda$ (which is $\lambda_1=k$) to get $\mu$. Next we move each part on the right-hand side of the Durfee rectangle of $\pi$ one row below its original position. We now add $\lambda_1-s$ to the first row and $1$ to each of the next $s$ rows to get $\nu$. Then $\nu\in \cq_{s+1,r}$ and $\nu_1=k+r+1> \mu_1+r$. (See Fig.~\ref{fig:1}\textrm{(b)}.)

It is easy to check that the above process is invertible and hence we obtain a bijection. This finishes the proof.
\end{proof}

\begin{figure}[ht]
\caption{The bijection $\phi$ in the proof of Lemma \ref{le:1}}
\vspace{1em}
\label{fig:1}
\includegraphics[width=0.9\textwidth]{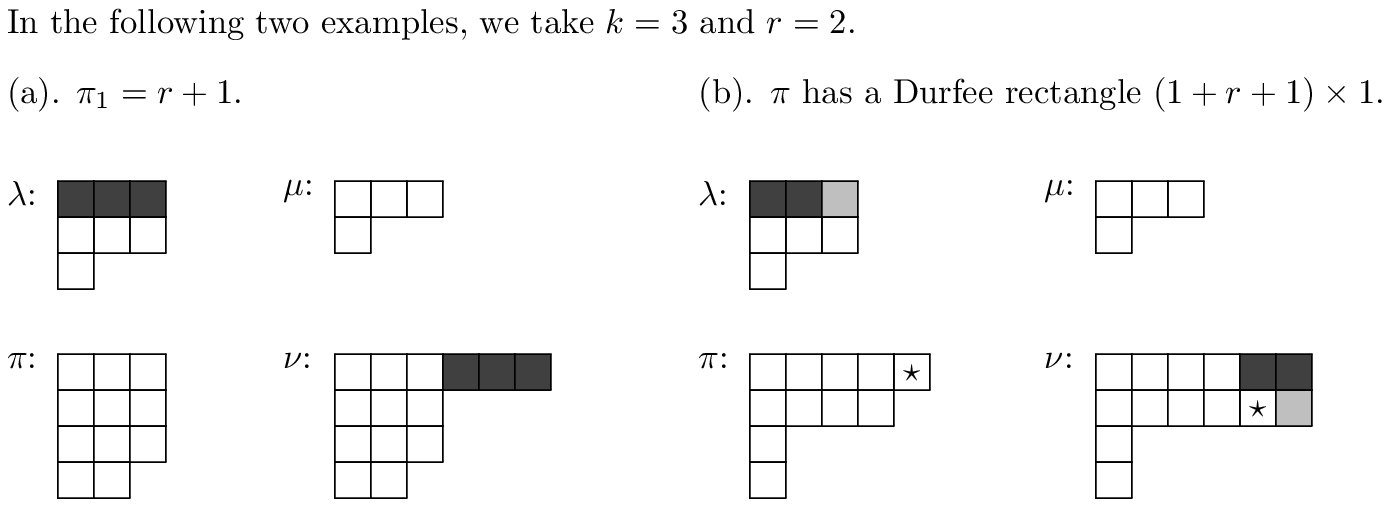}
\end{figure}

\begin{lemma}\label{le:2}
\begin{equation}
\sum_{(\lambda^*,\pi^*)\in \co_k^* \times \co_{k+r}^*} q^{|\lambda^*|+|\pi^*|}=\sum_{(\lambda,\pi)\in\cp_k\times \cp_{\le k+r}} q^{4|\lambda|+4|\pi|+2r}.
\end{equation}
\end{lemma}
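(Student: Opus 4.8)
The plan is to realize both sides as the common product $\dfrac{q^{4k+2r}}{(q^4;q^4)_k\,(q^4;q^4)_{k+r}}$, but to do so \emph{bijectively}, by exhibiting a weight-preserving correspondence $\co_k^* \times \co_{k+r}^* \to \cp_k \times \cp_{\le k+r}$ that factors through a chain of elementary maps applied to each coordinate separately. The heart of the matter is to understand a single set $\co_j^*$ of odd partitions with exactly $2j$ parts in which every distinct part has even multiplicity. First I would pair up equal parts: since each value occurs an even number of times, halving all multiplicities sends $\co_j^*$ bijectively onto the set of odd partitions with exactly $j$ parts and \emph{doubles} the size, so $|\lambda^*| = 2|\rho|$ where $\rho$ is the image. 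Next, subtracting $1$ from each of the $j$ odd parts and halving sends an odd partition with exactly $j$ parts and size $S$ to an arbitrary partition $\beta$ with at most $j$ parts, where $S = 2|\beta| + j$. Composing and then conjugating (which identifies ``at most $j$ parts'' with ``largest part $\le j$'', i.e.\ with $\cp_{\le j}$), I obtain a bijection $\co_j^* \cong \cp_{\le j}$ under which $|\lambda^*| = 4|\tilde\lambda| + 2j$ for $\tilde\lambda \in \cp_{\le j}$.

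I would then apply this with $j = k+r$ to the second coordinate directly: each $\pi^* \in \co_{k+r}^*$ corresponds to a $\pi \in \cp_{\le k+r}$ with $|\pi^*| = 4|\pi| + 2(k+r)$, contributing exactly $q^{4|\pi| + 2(k+r)}$. For the first coordinate ($j = k$) the target in the lemma is $\cp_k$ (largest part \emph{exactly} $k$) rather than $\cp_{\le k}$, so I insert one further elementary bijection: appending a single part equal to $k$ maps $\cp_{\le k}$ onto $\cp_k$ and raises the size by $k$. Hence $\lambda^* \in \co_k^*$ corresponds to $\lambda \in \cp_k$ with $|\lambda^*| = 4(|\lambda| - k) + 2k = 4|\lambda| - 2k$. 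Multiplying the two coordinates, the total weight becomes $q^{(4|\lambda| - 2k) + (4|\pi| + 2(k+r))} = q^{4|\lambda| + 4|\pi| + 2r}$, which is precisely the summand on the right, and the full map is a bijection because it is a composition of bijections in each coordinate.

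The step I expect to be the main obstacle is the \emph{asymmetry} between the two factors: the second coordinate lands in $\cp_{\le k+r}$ while the first must land in $\cp_k$. Getting the exponent bookkeeping right across the chain (doubling multiplicities, the odd-to-ordinary $\tfrac{o-1}{2}$ map, conjugation, and finally the ``append a part $k$'' map) is where one must be careful; it is exactly the extra append-$k$ step, with its $-2k$ contribution to $|\lambda^*|$, that cancels the naive $+2k$ and leaves the clean shift $2r$ in the final exponent. As a sanity check one can verify the single-set identity $\sum_{\lambda^*\in\co_j^*}q^{|\lambda^*|} = q^{2j}/(q^4;q^4)_j$ directly, together with $\sum_{\lambda\in\cp_k}q^{4|\lambda|} = q^{4k}/(q^4;q^4)_k$ and $\sum_{\pi\in\cp_{\le k+r}}q^{4|\pi|} = 1/(q^4;q^4)_{k+r}$; their product on the right reproduces $\dfrac{q^{4k+2r}}{(q^4;q^4)_k\,(q^4;q^4)_{k+r}}$, confirming the identity for any reader who prefers an analytic verification to the bijective argument.
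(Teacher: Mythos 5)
Your proof is correct and takes essentially the same route as the paper: the paper constructs the map in the forward direction by splitting each square of $\lambda$ and $\pi$ into four, deleting (resp.\ appending) a largest part, and conjugating, and your chain of elementary bijections (halving multiplicities, the $o\mapsto(o-1)/2$ map, conjugation, and appending a part $k$) composes to exactly the inverse of that bijection, with the same exponent bookkeeping yielding the shift $2r$. The only cosmetic difference is the direction of travel and the choice of elementary steps, so nothing further is needed.
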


\begin{proof}
Let $(\lambda,\pi)\in\cp_k\times \cp_{\le k+r}$. We split each square in the Young diagrams of $\lambda$ and $\pi$ into four squares and get $\lambda'$ and $\pi'$, both of which are even partitions where each different part appears an even number of times. We further notice that $\lambda'_1=2k$ and $\pi'_1\le 2k+2r$. We now delete one of the largest parts of $\lambda'$ and take its conjugate to obtain $\lambda^*$. On the other hand, we append $2k+2r$ to the top of $\pi'$ and take its conjugate to obtain $\pi^*$. Obviously, $\lambda^*\in \co_k^*$ and $\pi^* \in \co_{k+r}^*$. One may see Fig.~\ref{fig:2} for an example.

This process is invertible. Hence the lemma follows readily.
\end{proof}

\begin{figure}[ht]
\caption{An example in the proof of Lemma \ref{le:2} ($k=3$ and $r=1$)}
\vspace{1em}
\label{fig:2}
\includegraphics[width=0.9\textwidth]{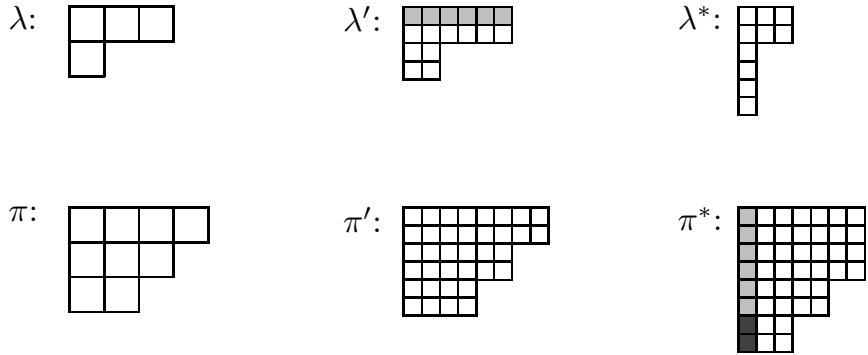}
\end{figure}

\begin{lemma}\label{le:3}
\begin{equation}
\sum_{(\mu^*,\nu^*)\in \ce^* \times \ceo_{k,r}^*} q^{|\mu^*|+|\nu^*|}=\sum_{(\mu,\nu)\in\cp \times \cq_{k,r}} q^{4|\mu|+4|\nu|+2r}.
\end{equation}
\end{lemma}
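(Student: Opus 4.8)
The plan is to mimic the structure of the proof of Lemma~\ref{le:2} and build an explicit weight-preserving bijection
$$\Phi\colon \cp\times\cq_{k,r}\longrightarrow \ce^*\times\ceo_{k,r}^*,\qquad (\mu,\nu)\longmapsto(\mu^*,\nu^*),$$
for which $|\mu^*|+|\nu^*|=4|\mu|+4|\nu|+2r$; the lemma then follows by summing over all such pairs. The $\mu$-coordinate is handled by the same cell-splitting device used in Lemma~\ref{le:2}: splitting each cell of the Young diagram of $\mu$ into four produces a partition $\mu^*$ all of whose parts are even and each of whose distinct parts occurs an even number of times, and this operation is a bijection from $\cp$ onto $\ce^*$ with $|\mu^*|=4|\mu|$. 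Thus everything reduces to producing a bijection $\nu\mapsto\nu^*$ from $\cq_{k,r}$ onto $\ceo_{k,r}^*$ with $|\nu^*|=4|\nu|+2r$.

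For the $\nu$-coordinate I would first recall that a partition $\nu\in\cq_{k,r}$ splits into its $k\times(k+r+1)$ Durfee rectangle, the part $\alpha$ lying to its right (occupying only the first $k$ rows), and the part $\beta$ lying below it, all of whose parts are $\le k+r$ by the defining inequality; in particular $\nu_1\ge\cdots\ge\nu_k\ge k+r+1$ while $\nu_{k+1},\nu_{k+2},\dots\le k+r$. Splitting every cell of $\nu$ into four yields $\nu'\in\ce^*$ with $|\nu'|=4|\nu|$, and under this operation the first $k$ rows become the first $2k$ rows of $\nu'$, each of length $\ge 2k+2r+2$ and occurring in equal consecutive pairs, whereas $\beta$ becomes a partition $\beta'$ with all parts $\le 2k+2r$, again in equal pairs. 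I then delete the last cell from each of these top $2k$ rows and append a single part equal to $2k+2r$ to $\beta'$; the partition $\nu^*$ is the union of the two resulting pieces.

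I expect $\nu^*$ to land in $\ceo_{k,r}^*$ for the following reasons. Deleting one cell from each of the $2k$ paired top rows turns them into $2k$ odd parts, each $\ge 2k+2r+1$, still occurring in equal pairs, so every distinct odd value has even multiplicity. Appending one copy of $2k+2r$ to $\beta'$ creates an even part $2k+2r$ of odd multiplicity while leaving every smaller even part with even multiplicity, and since $\beta'$ has parts $\le 2k+2r$ this $2k+2r$ is the largest even part. Every even part is then $\le 2k+2r<2k+2r+1\le$ every odd part, so $\nu^*$ has its even parts below its odd parts with only the largest even part of odd multiplicity; that is, $\nu^*\in\ceo_{k,r}^*$. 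For the weight, the $2k$ single-cell deletions contribute $-2k$ and the appended part contributes $+(2k+2r)$, so altogether $|\nu^*|=4|\nu|-2k+(2k+2r)=4|\nu|+2r$, as required.

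Finally I would check invertibility, which is where I expect the only real subtlety to lie. Given $\nu^*\in\ceo_{k,r}^*$, the appended part is recovered as the unique even part of odd multiplicity, whose value records $2k+2r$; removing one copy of it leaves an element of $\ce^*$, while adding one cell back to each of the $2k$ largest (odd) parts restores $2k$ even parts in equal pairs, each $\ge 2k+2r+2$. Both pieces can then be unsplit (halving in each direction) to recover the top $k$ rows and the below-part $\beta$, and reassembling gives $\nu$. The main obstacle is to verify that the reassembled $\nu$ has Durfee rectangle exactly $k\times(k+r+1)$: one must confirm that the recovered below-part has all parts $\le k+r$, so that no wider rectangle fits and $\nu$ genuinely lies in $\cq_{k,r}$ rather than in some $\cq_{k',r}$ with $k'\neq k$. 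This is precisely the point at which the inequality ``largest part below the Durfee rectangle $<k+r+1$'' in the definition of $\cq_{k,r}$ is used, and matching the parity and ordering conditions defining $\ceo_{k,r}^*$ with this decomposition is the crux of the argument.
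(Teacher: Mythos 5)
Your proof is correct and takes essentially the same route as the paper's: split every cell into four, remove one cell from the end of each of the top $2k$ rows (equivalently, delete one of the two columns of height $2k$ immediately to the right of the $(2k+2r)\times 2k$ rectangle of $\nu'$), and append a single part $2k+2r$ below. The invertibility point you flag as the crux does go through exactly as you indicate, since after removing one copy of $2k+2r$ the remaining even parts of $\nu^*$ are all $\le 2k+2r$ and hence halve to parts $\le k+r$, pinning the Durfee rectangle of the reassembled $\nu$ at exactly $(k+r+1)\times k$.
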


\begin{proof}
Let $(\mu,\nu)\in\cp \times \cq_{k,r}$.  We split each square in the Young diagrams of $\mu$ and $\nu$ into four squares and get $\mu'$ and $\nu'$. We first put $\mu^*=\mu'\in \ce^*$. For $\nu'$, we observe that it has a Durfee rectangle $(2k+2r)\times 2k$ while on the right-hand side of this Durfee rectangle, there are two columns of size $2k$. We now delete one of the two columns and append $2k+2r$ below the Durfee rectangle to obtain $\nu^*$. It is not difficult to check that $\nu^* \in \ceo_{k,r}^*$. One may see Fig.~\ref{fig:3} for an example.

This process is invertible. Hence the lemma follows readily.
\end{proof}

\begin{figure}[ht]
\caption{An example in the proof of Lemma \ref{le:3} ($k=3$ and $r=1$)}
\vspace{1em}
\label{fig:3}
\includegraphics[width=0.9\textwidth]{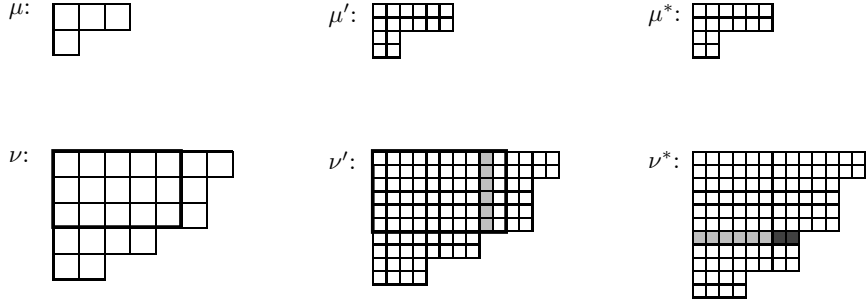}
\end{figure}

We now finish the combinatorial proof of \eqref{eq:gf2}.

\begin{proof}[Combinatorial proof of \eqref{eq:gf2}]
We rewrite \eqref{eq:gf2} as
$$\frac{1}{(q^4;q^4)_\infty}\sum_{\pi \in \ceo^*} z^{\eoc(\pi)}q^{|\pi|}=\frac{1}{(z^2q^2;q^4)_\infty(z^{-2}q^2;q^4)_\infty}.$$

We observe that given a partition in $\ceo^*$ with even-odd crank being $2r$, its conjugate is also in $\ceo^*$ and has even-odd crank $-2r$. By the symmetry, it suffices to prove
$$\sum_{(\lambda^*,\pi^*)\in \cup_{k\ge 0}\left(\co_k^* \times \co_{k+r}^*\right)} q^{|\lambda^*|+|\pi^*|} = \sum_{(\mu^*,\nu^*)\in \cup_{k\ge 0}\left(\ce^* \times \ceo_{k,r}^*\right)} q^{|\mu^*|+|\nu^*|}$$
for each nonnegative integer $r$. (Here $\nu^*$ runs through all partitions in $\ceo^*$ with even-odd crank being $2r$.)

Finally, we claim that the above identity is a direct consequence of Lemmas \ref{le:1}--\ref{le:3} and hence we complete the proof.
\end{proof}

\begin{remark}
It is worth pointing out that one may obtain a more direct bijection between $\cup_{k\ge 0}\left(\co_k^* \times \co_{k+r}^*\right)$ and $\cup_{k\ge 0}\left(\ce^* \times \ceo_{k,r}^*\right)$ by using a similar argument as the proof of Lemma \ref{le:1}. However, Lemma \ref{le:1} and its combinatorial proof have independent interest.
\end{remark}

\section{The largest even part}\label{sect:lep}

Let $\eo_0(n)$ (resp.~$\eo_2(n)$) denote the number of partitions of $n$ in $\ceo^*$ with largest even part congruent to $0$ (resp.~$2$). We are interested in the relation between $\eo_0(n)$ and $\eo_2(n)$.

Notice that
\begin{align*}
\sum_{n\ge 0}(\eo_0(n)-\eo_2(n))q^n &= \sum_{k\ge 0} \frac{(-q^2)^k}{(q^4;q^4)_k (q^{4k+2};q^4)_\infty}\\
&=\frac{1}{(q^2;q^4)_\infty}\sum_{k\ge 0} \frac{(q^2;q^4)_k (-q^2)^k}{(q^4;q^4)_k}\\
&= \frac{1}{(q^2;q^4)_\infty}\frac{(-q^4;q^4)_\infty}{(-q^2;q^4)_\infty}\\
&= \frac{(-q^4;q^4)_\infty}{(q^4;q^8)_\infty},
\end{align*}
where the second last identity comes from the celebrated $q$-binomial theorem (cf.~\cite[p.~17, Theorem 2.1]{And1976})
$$\sum_{n\ge 0}\frac{(a;q)_n z^n}{(q;q)_n}=\frac{(az;q)_\infty}{(z;q)_\infty}.$$

The trivial fact that $(-q^4;q^4)_\infty/(q^4;q^8)_\infty$ is a positive series of $q^4$ immediately leads to the following surprising result.

\begin{theorem}\label{th:r1}
We have
$$\eo_0(n)\begin{cases}
=\eo_2(n) & \text{if $n$ is not divisible by $4$};\\
>\eo_2(n) & \text{if $n$ is divisible by $4$}.
\end{cases}$$
\end{theorem}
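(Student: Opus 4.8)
The plan is to read off both cases directly from the generating function $\sum_{n\ge 0}(\eo_0(n)-\eo_2(n))q^n = \frac{(-q^4;q^4)_\infty}{(q^4;q^8)_\infty}$ established just above the statement, so that the only work remaining is a coefficient analysis of this infinite product.

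First I would record the structural observation that the right-hand side is a power series in $q^4$ alone: both $(-q^4;q^4)_\infty=\prod_{k\ge 1}(1+q^{4k})$ and $(q^4;q^8)_\infty^{-1}=\prod_{k\ge 0}(1-q^{8k+4})^{-1}$ involve only powers of $q^4$. Consequently the coefficient of $q^n$ vanishes whenever $4\nmid n$, which yields $\eo_0(n)-\eo_2(n)=0$, i.e.\ $\eo_0(n)=\eo_2(n)$, settling the first case.

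For the second case I would set $Q=q^4$ and study $\frac{(-Q;Q)_\infty}{(Q;Q^2)_\infty}$, aiming to show that its coefficient of $Q^m$ is strictly positive for every $m\ge 0$. The factor $(Q;Q^2)_\infty^{-1}=\prod_{k\ge 0}(1-Q^{2k+1})^{-1}$ is the generating function for partitions into odd parts, whose coefficient of $Q^m$ is at least $1$ for all $m\ge 0$ (the all-ones partition $(1^m)$ always qualifies, and the empty partition handles $m=0$). The remaining factor $(-Q;Q)_\infty$ has nonnegative coefficients and constant term $1$. Expanding the product as a Cauchy product, the coefficient of $Q^m$ is therefore bounded below by the constant term of $(-Q;Q)_\infty$ times the coefficient of $Q^m$ in $(Q;Q^2)_\infty^{-1}$, hence is $\ge 1$. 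Translating back through $Q=q^4$, the coefficient of $q^{4m}$ is positive, giving $\eo_0(n)>\eo_2(n)$ whenever $4\mid n$.

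There is essentially no deep obstacle here; the one point demanding care is upgrading nonnegativity to strict positivity in the divisible-by-$4$ case, since the theorem asserts a strict inequality rather than merely $\eo_0(n)\ge\eo_2(n)$. If a slicker justification is preferred, one may instead invoke Euler's identity $(-Q;Q)_\infty=(Q;Q^2)_\infty^{-1}$ to collapse the product into $(Q;Q^2)_\infty^{-2}$ (equivalently $(-Q;Q)_\infty^2$), which manifestly enumerates pairs of partitions into odd (resp.\ distinct) parts and hence has a strictly positive coefficient in every degree, making the strict inequality transparent.
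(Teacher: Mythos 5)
Your proposal is correct and follows the same route as the paper: the paper establishes the identity $\sum_{n\ge 0}(\eo_0(n)-\eo_2(n))q^n = (-q^4;q^4)_\infty/(q^4;q^8)_\infty$ and then simply remarks that this is a positive series in $q^4$, from which the theorem follows immediately. Your additional justification of strict positivity (via partitions into odd parts, or via Euler's identity collapsing the product to $(q^4;q^8)_\infty^{-2}$) merely spells out what the paper dismisses as ``trivial.''
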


\begin{example}
We have $\eo_0(6)=2$ since $6$ has partitions
$$3+3,\quad 1+1+1+1+1+1$$
and $\eo_2(6)=2$ since $6$ has partitions
$$6,\quad 2+2+2.$$
\end{example}

\begin{example}
We have $\eo_0(8)=4$ since $8$ has partitions
$$8,\quad 4+2+2,\quad 3+3+1+1,\quad 1+1+1+1+1+1+1+1$$
and $\eo_2(8)=1$ since $8$ has partition
$$3+2+2.$$
\end{example}

\section{A weighted overpartition generalization}\label{sect:overpartition}

The result in Sect.~\ref{sect:lep} is motivated by Ae Ja Yee's suggestion on an overpartition analog of Andrews' partition function.

Let $\overline{\ceo}^*$ be the set of overpartitions where all even parts are below odd parts and only the largest even part appears an odd number of times. For $\pi \in \overline{\ceo}^*$, we use $o(\pi)$ to count the number of overlined parts in $\pi$. It is not difficult to write the generating function
\begin{align*}
\sum_{\pi \in \overline{\ceo}^*} z^{o(\pi)}q^{|\pi|}&= \frac{(-zq^2;q^4)_\infty}{(q^2;q^4)_\infty}+\sum_{k\ge 1}\frac{(1+z)q^{2k} (-zq^4;q^4)_{k-1} (-zq^{4k+2};q^4)_\infty}{(q^4;q^4)_k (q^{4k+2};q^4)_\infty}\\
&= \frac{(-zq^2;q^4)_\infty}{(q^2;q^4)_\infty} \sum_{k\ge 0} \frac{(-z;q^4)_{k} (q^2;q^4)_k q^{2k}}{(q^4;q^4)_k (-zq^2;q^4)_k}.
\end{align*}
However, this generating function seems to be unable to get further simplified.

On the other hand, the Bailey--Daum sum (cf.~\cite[Eq.~(17.6.5)]{And2010}) tells
$$\sum_{n\ge 0}\frac{(a;q)_n (b;q)_n}{(q;q)_n (aq/b;q)_n}\left(-\frac{q}{b}\right)^n = \frac{(-q;q)_\infty(aq;q^2)_\infty(aq^2/b^2;q^2)_\infty}{(-q/b;q)_\infty(aq/b;q)_\infty}.$$
This motivates us to assign the following weight to $\pi \in \overline{\ceo}^*$
$$w(\pi)=\begin{cases}
1 & \text{if the largest even part of $\pi$ is divisible by $4$};\\
-1 & \text{if the largest even part of $\pi$ is not divisible by $4$}.
\end{cases}$$

\begin{theorem}
\begin{equation}\label{eq:gf-over}
\sum_{\pi \in \overline{\ceo}^*} w(\pi) z^{o(\pi)}q^{|\pi|} = \frac{(-q^4;q^4)_\infty(-zq^4;q^8)_\infty^2}{(q^4;q^8)_\infty}.
\end{equation}
\end{theorem}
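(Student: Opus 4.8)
The plan is to start from the first, more explicit form of the generating function for $\sum_{\pi \in \overline{\ceo}^*} z^{o(\pi)}q^{|\pi|}$ already derived in the excerpt, and to insert the weight $w(\pi)$ directly into that sum. Since $w(\pi)$ depends only on the residue class modulo $4$ of the largest even part, it is most naturally tracked by splitting according to the largest even part: the term with no even part (tacitly largest even part $0$) and the constant term $(-zq^2;q^4)_\infty/(q^2;q^4)_\infty$ get weight $+1$, while in the $k$-sum the largest even part is $2k$, so the $k$-th term picks up a sign $(-1)^{k}$ (it is $+1$ exactly when $2k\equiv 0\pmod 4$, i.e.\ $k$ even). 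Thus I would assert the weighted analog of the second displayed identity, namely
\begin{equation*}
\sum_{\pi \in \overline{\ceo}^*} w(\pi)z^{o(\pi)}q^{|\pi|} = \frac{(-zq^2;q^4)_\infty}{(q^2;q^4)_\infty} \sum_{k\ge 0} \frac{(-z;q^4)_{k} (q^2;q^4)_k (-q^2)^{k}}{(q^4;q^4)_k (-zq^2;q^4)_k},
\end{equation*}
the only change from the unweighted series being the replacement of $q^{2k}$ by $(-q^2)^{k}=(-1)^k q^{2k}$. Establishing this requires re-running the elementary generating-function bookkeeping of Section~\ref{sect:overpartition} with the sign attached, which is routine.

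Next I would evaluate the inner sum by the Bailey--Daum summation quoted in the excerpt. The series there is $\sum_{n\ge 0}\frac{(a;q)_n (b;q)_n}{(q;q)_n (aq/b;q)_n}\left(-\frac{q}{b}\right)^n$. To match my inner sum I would work in base $q^4$ (so the role of $q$ in Bailey--Daum is played by $q^4$) and choose the parameters so that the two numerator factors become $(-z;q^4)_k$ and $(q^2;q^4)_k$, the first denominator factor becomes $(q^4;q^4)_k$, the second becomes $(-zq^2;q^4)_k$, and the argument of the $k$-th power becomes $-q^2$. Concretely I expect $a=-z$ and $b=q^2$, which gives $aq/b \to (-z)q^4/q^2 = -zq^2$, matching $(-zq^2;q^4)_k$, and then $-q/b \to -q^4/q^2=-q^2$, matching the $(-q^2)^k$; the factor $(b;q)_n$ becomes $(q^2;q^4)_k$ as desired. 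With these substitutions the right-hand side of Bailey--Daum becomes
\begin{equation*}
\frac{(-q^4;q^4)_\infty(-zq^4;q^8)_\infty(-zq^8/q^4;q^8)_\infty}{(-q^4/q^2;q^4)_\infty(-zq^4/q^2;q^4)_\infty} = \frac{(-q^4;q^4)_\infty(-zq^4;q^8)_\infty^2}{(-q^2;q^4)_\infty(-zq^2;q^4)_\infty},
\end{equation*}
where I have used $aq/b^2 \to (-z)q^8/q^4 = -zq^4$ so that $(aq^2/b^2;q^2)_\infty$ in base $q^4$ reads $(-zq^4;q^8)_\infty$, coinciding with $(aq;q^2)_\infty=(-zq^4;q^8)_\infty$ and producing the square.

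Finally I would multiply this closed form by the prefactor $\dfrac{(-zq^2;q^4)_\infty}{(q^2;q^4)_\infty}$. The factor $(-zq^2;q^4)_\infty$ cancels the $(-zq^2;q^4)_\infty$ in the denominator above, leaving
\begin{equation*}
\frac{(-q^4;q^4)_\infty(-zq^4;q^8)_\infty^2}{(q^2;q^4)_\infty(-q^2;q^4)_\infty},
\end{equation*}
and the remaining job is the purely $z$-free simplification $(q^2;q^4)_\infty(-q^2;q^4)_\infty=(q^4;q^8)_\infty$, which follows from $(x;q)_\infty(-x;q)_\infty=(x^2;q^2)_\infty$ with $x=q^2$ and base $q^4$. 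This yields exactly the right-hand side of \eqref{eq:gf-over}. The main obstacle I anticipate is none of the algebra itself but the careful specialization of Bailey--Daum: one must verify that every one of the five factors, together with the sign and the base change to $q^4$, lines up simultaneously, since a single mismatched parameter would spoil the clean cancellation; so I would double-check each substitution $a=-z$, $b=q^2$, $q\mapsto q^4$ factor by factor before asserting the final identity.
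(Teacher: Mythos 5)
Your proposal is correct and follows essentially the same route as the paper: insert the weight by replacing $q^{2k}$ with $(-q^2)^k$ in the explicit generating function, evaluate the resulting sum by the Bailey--Daum summation with $a=-z$, $b=q^2$ in base $q^4$, and simplify using $(q^2;q^4)_\infty(-q^2;q^4)_\infty=(q^4;q^8)_\infty$. All parameter specializations check out, and your version simply spells out the steps the paper leaves implicit.
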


\begin{proof}
We have
\begin{align*}
\sum_{\pi \in \overline{\ceo}^*} w(\pi) z^{o(\pi)}q^{|\pi|}&= \frac{(-zq^2;q^4)_\infty}{(q^2;q^4)_\infty}+\sum_{k\ge 1}\frac{(1+z)(-q^2)^k (-zq^4;q^4)_{k-1} (-zq^{4k+2};q^4)_\infty}{(q^4;q^4)_k (q^{4k+2};q^4)_\infty}\\
&= \frac{(-zq^2;q^4)_\infty}{(q^2;q^4)_\infty} \sum_{k\ge 0} \frac{(-z;q^4)_{k} (q^2;q^4)_k (-q^2)^k}{(q^4;q^4)_k (-zq^2;q^4)_k}\\
&=  \frac{(-zq^2;q^4)_\infty}{(q^2;q^4)_\infty}\frac{(-q^4;q^4)_\infty(-zq^4;q^8)_\infty^2}{(-q^2;q^4)_\infty (-zq^2;q^4)_\infty}\\
&= \frac{(-q^4;q^4)_\infty(-zq^4;q^8)_\infty^2}{(q^4;q^8)_\infty}.
\end{align*}
\end{proof}

If we take $z=0$, then we obtain the generating function identity of $\eo_0(n)-\eo_2(n)$ in Sect.~\ref{sect:lep}. If we take $z=1$, then
\begin{align*}
\sum_{\pi \in \overline{\ceo}^*} w(\pi) q^{|\pi|}= \frac{(-q^4;q^4)_\infty(-q^4;q^8)_\infty^2}{(q^4;q^8)_\infty}.
\end{align*}
Again we notice that this is a positive series of $q^4$. Let $\oeo_0(n)$ (resp.~$\oeo_2(n)$) denote the number of overpartitions of $n$ in $\overline{\ceo}^*$ with largest even part congruent to $0$ (resp.~$2$).

\begin{theorem}\label{th:r2}
We have
$$\oeo_0(n)\begin{cases}
=\oeo_2(n) & \text{if $n$ is not divisible by $4$};\\
>\oeo_2(n) & \text{if $n$ is divisible by $4$}.
\end{cases}$$
\end{theorem}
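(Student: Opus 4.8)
The plan is to read the theorem off directly from the $z=1$ specialization of \eqref{eq:gf-over}, which is already recorded in the text immediately before the statement:
$$\sum_{\pi \in \overline{\ceo}^*} w(\pi) q^{|\pi|}= \frac{(-q^4;q^4)_\infty(-q^4;q^8)_\infty^2}{(q^4;q^8)_\infty}.$$
First I would unpack the left-hand side combinatorially. By the definition of the weight $w$, every $\pi \in \overline{\ceo}^*$ whose largest even part is divisible by $4$ contributes $+q^{|\pi|}$, while every $\pi$ whose largest even part is $\equiv 2 \pmod 4$ contributes $-q^{|\pi|}$. Grouping by $|\pi|=n$ therefore identifies the left-hand side with $\sum_{n\ge 0}(\oeo_0(n)-\oeo_2(n))q^n$, so the whole theorem reduces to an analysis of the coefficients of the product on the right.

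Next I would observe that each of the three factors $(-q^4;q^4)_\infty$, $(-q^4;q^8)_\infty^2$, and $1/(q^4;q^8)_\infty$ is a power series in $q^4$ alone, hence so is their product. Consequently the coefficient of $q^n$ on the right vanishes whenever $4\nmid n$, which yields $\oeo_0(n)=\oeo_2(n)$ for all such $n$ and settles the first case at once.

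For the second case I need strict positivity of the coefficient of $q^{4m}$ for every $m\ge 0$, not merely nonnegativity. The key step is to peel off the factor $1/(q^4;q^8)_\infty=\prod_{k\ge 0}(1-q^{4+8k})^{-1}$, the generating function for partitions into parts $\equiv 4 \pmod 8$; its coefficient of $q^{4m}$ equals the number of partitions of $m$ into odd parts and is therefore at least $1$ for every $m\ge 0$. The remaining factor $(-q^4;q^4)_\infty(-q^4;q^8)_\infty^2$ is a product of terms $1+q^{\bullet}$, so it has nonnegative coefficients and constant term $1$. Writing the full series as the Cauchy product of these two pieces, the single term pairing the constant term $1$ of the second factor with $q^{4m}$ from $1/(q^4;q^8)_\infty$ already contributes a strictly positive amount, and every other contribution is nonnegative; hence the coefficient of $q^{4m}$ is positive. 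This gives $\oeo_0(n)>\oeo_2(n)$ whenever $4\mid n$.

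I expect the only subtle point to be precisely this passage from nonnegativity — which is immediate from the product form and is all that the phrase ``positive series of $q^4$'' literally asserts — to the strict inequality claimed in the theorem. The device of isolating the reciprocal factor, whose every coefficient is individually positive, is exactly what upgrades ``$\ge$'' to ``$>$''. The argument mirrors the one behind Theorem \ref{th:r1}, the only new ingredient being the harmless extra factor $(-q^4;q^8)_\infty^2$, which contributes nonnegative coefficients and constant term $1$ and so does not affect the reasoning.
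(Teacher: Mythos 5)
Your proposal is correct and follows the paper's own route: the paper simply sets $z=1$ in \eqref{eq:gf-over} and observes that the resulting product is a positive series in $q^4$, which is exactly your argument. Your extra step of isolating $1/(q^4;q^8)_\infty$ to upgrade nonnegativity to strict positivity is a detail the paper leaves implicit, and you supply it correctly.
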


\section{Conclusion}

The combinatorial proof of \eqref{eq:gf-over} seems to be more difficult than Andrews' non-overlined version and hence we cry out for such an interpretation. On the other hand, it would be interesting to see direct combinatorial proofs of Theorems \ref{th:r1} and \ref{th:r2}.

\subsection*{Acknowledgements}

I would like to thank George E. Andrews for many helpful discussions. I would also like to thank Ae Ja Yee for her suggestion on the overpartition generalization in Sect.~\ref{sect:overpartition}.

\bibliographystyle{amsplain}

\end{document}